\newenvironment{proof}[1][Proof]{\noindent\textbf{#1.} }{\ \rule{0.5em}{0.5em}}
\newtheorem{thm}{Theorem}
\newtheorem{lemma}{Lemma}
\df\P{{\mathcal P}}
\df\Q{{\mathcal Q}}
\df\D{{\mathcal D}}
\df\X{{\mathcal X}}
\df\G{{\Gamma}}
\df\g{{\gamma}}
\df\l{{\lambda}}
\df\L{{\Lambda}}
\df\k{{\kappa}}
\df\o{{\omega}}
\df\O{{\Omega}}
\df\r{{\rho}}
\df\s{{\sigma}}
\df\d{{\delta}}
\df\e{{\varepsilon}}
\df\a{{\alpha}}
\df\b{{\beta}}
\df\t{{\theta}}
\df\R{{\mathbb{R}}}
\df\N{{\mathbb{N}}}
\df\Z{{\mathbb{Z}}}
\df\s{{\mathbb{S}}}
\begin{document}

\author{Jo\"{e}l Rouyer
\and Costin V\^{\i}lcu}
\title{The connected components\\of the space of Alexandrov surfaces}
\maketitle

\begin{abstract}
Denote by $\mathcal{A}(\kappa)$ the set of all compact Alexandrov surfaces
with curvature bounded below by $\kappa$ without boundary, endowed with the
topology induced by the Gromov-Hausdorff metric. We determine the connected
components of $\mathcal{A}(\kappa)$ and of its closure.

\end{abstract}

{\small Math. Subj. Classification (2010): 53C45}

{\small Key words and phrases: space of Alexandrov surfaces}

%%%%%%%%%%%%%%%%%%%%%%%%%%%%%%%%%%%%%%%%%%%%%%%%%%%%%%%%%%%%%%%%%%%%%%%%%%%%%%%%%%%%%%%%%

\section{Introduction and results}

In this note, by an \textit{Alexandrov surface} we understand a compact
$2$-dimen\-sio\-nal Alexandrov space with curvature bounded below by $\kappa$,
without boundary. Roughly speaking, an Alexandrov surface is a closed
topological surface endowed with an intrinsic geodesic distance satisfying
Toponogov's angle comparison condition. See \cite{BGP} or \cite{Shiohama92}
for definitions and basic facts about such spaces.

Denote by $\mathcal{A}(\kappa)$ the set of all Alexandrov surfaces. Endowed
with the Gromov-Hausdorff metric $d_{GH}$, $\mathcal{A}(\kappa)$ becomes a Baire
space in which Riemannian surfaces form a dense subset \cite{IRV2}.

Perelman's stability theorem (see \cite{Kapo1}, \cite{Per1}) states, in our
case, that close Alexandrov surfaces are homeomorphic, so Alexandrov surfaces
with different topology are in different connected components of
$\mathcal{A}(\kappa)$. Here we show that homeomorphic Alexandrov surfaces are
in the same component of $\mathcal{A}(\kappa)$.

Let $\mathcal{A}(\kappa,\chi,o)$ denote the set of all surfaces in
$\mathcal{A}(\kappa)$ of Euler-Poincar\'{e} characteristic $\chi$ and
orientability $o$, where $o=1$ if the surface is orientable and
$o=-1$ otherwise.

\begin{thm}
\label{1} If non-empty, $\mathcal{A}\left(  \kappa,\chi,o\right) $ is a
connected component of $\mathcal{A}\left(  \kappa\right) $, for $\kappa
\in\mathbb{R}$, $\chi\leq2$ and $o=\pm1$.
\end{thm}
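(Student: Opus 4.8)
The plan is to reduce the statement to a path-connectedness assertion and then, for every $X\in\mathcal A(\kappa,\chi,o)$, to construct explicitly a path in $\mathcal A(\kappa,\chi,o)$ from $X$ to a fixed model surface. First, by Perelman's stability theorem the homeomorphism type of an Alexandrov surface is locally constant on $\mathcal A(\kappa)$, and a compact surface without boundary is determined up to homeomorphism by the pair $(\chi,o)$; hence each stratum $\mathcal A(\kappa,\chi,o)$ is open, and since its complement is the union of the remaining (open) strata it is also closed. Thus $\mathcal A(\kappa,\chi,o)$ is a union of connected components, and it is a single component if and only if it is connected; I will show it is path-connected. As model $M=M(\kappa,\chi,o)$ I take the round sphere, resp.\ the round projective plane $\mathbb{RP}^2$, of curvature $\max\{\kappa,1\}$ when $\chi=2$, resp.\ $\chi=1$; a flat torus or flat Klein bottle when $\chi=0$ (which forces $\kappa\le0$); and a closed surface of constant curvature $\kappa$ of the appropriate type when $\chi<0$ (which forces $\kappa<0$). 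Each such $M$ visibly lies in $\mathcal A(\kappa,\chi,o)$, and one checks directly that all models of a given type are joined to one another: for $\chi\le 0$ via the connectedness of the relevant Teichm\"uller/moduli space together with rescaling the ambient curvature, and for $\chi=2$, $\kappa\le0$ moreover via Alexandrov's embedding theorem, which realizes $\mathcal A(0,2,1)$ as the space of convex surfaces in $\mathbb R^3$ and hence as a path-connected set through the Minkowski interpolation $K_t=(1-t)K+tB$. It then remains to join an arbitrary $X$ to a model.

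The core construction has two stages. Stage 1 deforms $X$ to a $\kappa$-\emph{polyhedral} surface $P$, one obtained by gluing finitely many geodesic polygons of the constant-curvature-$\kappa$ model plane ($\mathbb S^2_\kappa$, $\mathbb E^2$, or $\mathbb H^2_\kappa$) along edges, with all cone angles at most $2\pi$, so that $P\in\mathcal A(\kappa,\chi,o)$. Choose a fine geodesic triangulation $\mathcal T$ of $X$ with every closed triangle $\tau$ contained in a convex metric ball; then $\tau$, with its induced intrinsic metric, is a two-dimensional Alexandrov space with geodesic-polygonal boundary and curvature $\ge\kappa$. I deform, inside the space of curvature-$\ge\kappa$ metrics on a triangle with the three given side lengths fixed, from the metric of $\tau$ to that of the constant-curvature-$\kappa$ comparison triangle with the same side lengths, performing this for all triangles simultaneously. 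At each instant Petrunin's gluing theorem guarantees that reassembling the current triangles along $\mathcal T$ produces a space of curvature $\ge\kappa$; since the deformation is rel boundary the reassembly is always homeomorphic to $X$; so the whole path lies in $\mathcal A(\kappa,\chi,o)$. Its endpoint is glued from constant-curvature-$\kappa$ triangles, hence $\kappa$-polyhedral, and its cone angles are at most $2\pi$ because the comparison angles at a vertex sum to at most the (at most $2\pi$) total angle of $X$ there.

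Stage 2 joins $P$ to the model $M$ by essentially combinatorial moves. The space of $\kappa$-polyhedral metrics of type $(\chi,o)$ with a bounded number of vertices is path-connected: edge lengths vary continuously within the triangle-inequality and cone-angle constraints, bistellar (Pachner) moves are realized by continuous paths of degenerating triangles, and cone points may be merged; in addition one continuously deforms the ambient curvature parameter from $\kappa$ towards $0$ (every intermediate value still giving curvature $\ge\kappa$ when $\kappa\le 0$) and opens the cone angles towards $2\pi$. When $\chi\le 0$ this reaches a smooth constant-curvature surface, i.e.\ $M$. When $\chi>0$ the cone points cannot be entirely removed; one then lands on a doubly-covered polygon, which for $\kappa\le0$ is a degenerate convex surface and is carried to the round sphere (resp.\ $\mathbb{RP}^2$) by Minkowski interpolation, and for $\kappa>0$ is handled within the connected family of spherical cone metrics on $S^2$, whose ``no cone point'' locus is precisely $\mathbb S^2_\kappa=M$.

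The main obstacle is the local claim underlying Stage 1: that, with its boundary lengths fixed, the intrinsic metric of a small convex geodesic triangle of curvature $\ge\kappa$ can be connected by a path of curvature-$\ge\kappa$ metrics to the constant-curvature-$\kappa$ comparison triangle. This is where the Alexandrov--Zalgaller description of two-dimensional metrics of bounded curvature by their curvature measure and boundary turn, and the continuity of that correspondence, must be used with care: the natural move is to interpolate the curvature measure $\omega_\tau$ with $\kappa$ times the area measure, but the area measure itself varies during the interpolation, so the path must be organised in the space of metrics rather than in a fixed space of measures while keeping the bound $\ge\kappa$ throughout. A second delicate point is the $\kappa>0$ part of Stage 2: there Minkowski interpolation destroys the bound, and the rigidity of spherical cone metrics must instead be navigated via their developing maps (or via the normalized Ricci flow, which keeps $\min K$ nondecreasing and converges to the round metric). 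The remaining ingredients --- the clopen reduction, the mutual connectedness of the models, and the Stage 2 combinatorics for $\kappa\le0$ --- are routine.
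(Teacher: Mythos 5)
The clopen reduction via stability is fine and matches the paper, but the heart of your argument is Stage 1, and its key claim --- that the intrinsic metric of a small geodesic triangle of curvature $\ge\kappa$ can be joined, with its three side lengths held fixed, by a path of curvature-$\ge\kappa$ metrics to the constant-curvature comparison triangle --- is never proved; you yourself flag it as the ``main obstacle'' and only describe why the naive interpolation of curvature measures does not obviously work. As written this is a conjecture, not a lemma, and nothing else in Stage 1 substitutes for it. Even granting it, the reassembly step is not covered by Petrunin's gluing theorem, which glues \emph{two} Alexandrov spaces along an isometry of their \emph{entire} boundaries; regluing finitely many triangles along a triangulation, with vertices of valence at least three, requires the two-dimensional Alexandrov--Zalgaller gluing theorem, whose hypothesis (angle sums $\le 2\pi$ at every identified vertex, at every instant of the deformation) is a constraint your unconstructed path would additionally have to satisfy, e.g.\ by some monotonicity of angles that you never arrange. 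Continuity of the whole family in $d_{GH}$ would also need an argument.

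Stage 2 has further unproved steps, most seriously for $\kappa>0$: you concede that Minkowski interpolation destroys the lower curvature bound there, and the proposed substitutes (connectedness of the space of spherical cone metrics with angles $\le 2\pi$ and curvature $\ge\kappa$, or normalized Ricci flow) are asserted rather than established --- indeed the Ricci-flow claim is wrong as stated, since on $S^2$ one only has $\frac{d}{dt}K_{\min}\ge K_{\min}(K_{\min}-r)$, so $K_{\min}$ can decrease when $0<K_{\min}<r$, and the flow does not apply to cone metrics anyway. The combinatorial moves (Pachner moves via degenerating triangles, merging cone points, ``opening'' cone angles) likewise need verification that the curvature bound, the angle condition and $d_{GH}$-continuity persist. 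For comparison, the paper's route avoids all of this: for $\kappa<0$ it uses density of Riemannian surfaces in each stratum and a linear interpolation of Riemannian metrics, rescaled to keep Gauss curvature $\ge\kappa$; for $\kappa\ge0$ only $\chi\ge0$ occurs, and those strata are handled by Alexandrov's realization theorem plus Minkowski interpolation of convex bodies (projected into an open hemisphere when $\kappa=1$, and applied to centrally symmetric surfaces modulo $\mathbb{Z}_2$ for the projective planes), together with the observation that $\mathcal{A}(0,0,\pm1)$ consists only of flat tori and flat Klein bottles. Your strategy might be completable, but the two steps it truly hinges on --- the rel-boundary triangle deformation and the $\kappa>0$ connectivity of polyhedral metrics --- are exactly the ones left unproved, so the proposal does not yet constitute a proof.
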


A special motivation for this result comes from the study of \textit{most} (in
the sense of Baire category) Alexandrov surfaces. For example, we prove in
\cite{RV2} that most Alexandrov surfaces have either infinitely many simple
closed geodesics, or no such geodesic, depending on the value of $\kappa$ and
the connected component of $\mathcal{A}(\kappa)$ to which they belong.
Moreover, for descriptions of most Alexandrov surfaces given in \cite{A-Z} and
\cite{IRV2}, one has to exclude from the whole space $\mathcal{A}(0)$ its
components consisting of flat surfaces.

\medskip

Denote by $\bar{\mathcal{A}}(\kappa)$ (respectively $\bar{\mathcal{A}}%
(\kappa,\chi,o)$) the closure with respect to $d_{GH}$ of $\mathcal{A}(\kappa)$ (respectively
$\mathcal{A}\left(\kappa,\chi,o\right)$) in the space of all compact
metric spaces. Using Theorem \ref{1}, we can also
give the connected components of $\bar{\mathcal{A}}(\kappa)$.

\begin{thm}
\label{2} If $\kappa\geq0$, $\bar{\mathcal{A}}(\kappa)$ is connected. If
$\kappa<0$, the connected components of $\bar{\mathcal{A}}(\kappa)$ are
$\bigcup_{\chi\geq0,o=\pm1}\bar{\mathcal{A}}\left(\kappa,\chi,o\right)$ and $\mathcal{A}(\kappa,\chi,o)$ ($\chi=-1,-2,\ldots$, $o=\pm1$).
\end{thm}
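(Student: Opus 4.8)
The plan is to exhibit $\bar{\mathcal{A}}(\kappa)$ as a disjoint union of subsets that are at once open, closed and connected; these will then be precisely its connected components. Four tools are needed: (i) the Gauss--Bonnet formula for Alexandrov surfaces, giving $\omega(S)=2\pi\chi(S)$ for the total curvature and hence $2\pi\chi(S)\ge\kappa\cdot\mathrm{Area}(S)$ for every $S\in\mathcal{A}(\kappa)$; (ii) the Bishop--Gromov inequality, by which a surface in $\mathcal{A}(\kappa)$ of diameter $\le D$ has area at most some $A(\kappa,D)<\infty$ and, locally, has every $\delta$-ball of area $O(\delta^{2})$ uniformly in the surface; (iii) Perelman's stability theorem (\cite{Kapo1}, \cite{Per1}); and (iv) the observation that shrinking a round sphere, a round $\mathbb{RP}^{2}$, or a flat torus or flat Klein bottle to zero diameter produces a sequence in the corresponding $\mathcal{A}(\kappa,\chi,o)$ converging to the one-point space --- valid whenever $\kappa\ge0$, and whenever $\kappa<0$ and $\chi\ge0$.

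The key observation is a dichotomy: if $S_{n}\to X$ with $S_{n}\in\mathcal{A}(\kappa)$ and $X$ compact, then $\dim X\le2$ (dimension cannot increase under such limits), and either $\dim X=2$, in which case $X$ is a $2$-dimensional compact Alexandrov space, the dimensions agree, and (iii) forces $S_{n}$ to be homeomorphic to $X$ for large $n$ --- so $X$ is a closed surface, $(\chi(S_{n}),o(S_{n}))$ is eventually constant, and $X\in\bar{\mathcal{A}}(\kappa,\chi(X),o(X))$ --- or $\dim X\le1$, in which case $X$ is a point, a circle or a segment, its $\delta$-covering number is $O(1/\delta)$, so by (ii) $\mathrm{Area}(S_{n})\to0$, by (i) $\chi(S_{n})\ge0$ for large $n$, hence $\chi(S_{n})\in\{0,1,2\}$ eventually and $X\in\bar{\mathcal{A}}(\kappa,\chi,o)$ for one of the finitely many pairs with $\chi\ge0$. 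In particular $\bar{\mathcal{A}}(\kappa)=\bigcup_{\chi,o}\bar{\mathcal{A}}(\kappa,\chi,o)$, and every degenerate (lower-dimensional) limit already lies in $\bigcup_{\chi\ge0,o}\bar{\mathcal{A}}(\kappa,\chi,o)$.

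For $\kappa\ge0$, inequality (i) gives $\mathcal{A}(\kappa,\chi,o)=\emptyset$ for $\chi<0$, so $\bar{\mathcal{A}}(\kappa)$ is the union of the finitely many nonempty $\bar{\mathcal{A}}(\kappa,\chi,o)$ with $\chi\ge0$. Each of these is connected, being the closure of the connected set $\mathcal{A}(\kappa,\chi,o)$ (Theorem \ref{1}), and by (iv) each contains the one-point space; a finite union of connected sets sharing a common point is connected, so $\bar{\mathcal{A}}(\kappa)$ is connected.

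For $\kappa<0$ and $\chi<0$ one first shows $\bar{\mathcal{A}}(\kappa,\chi,o)=\mathcal{A}(\kappa,\chi,o)$: if $S_{n}\in\mathcal{A}(\kappa,\chi,o)$ converges to a compact $X$, then by (i) $\mathrm{Area}(S_{n})\ge2\pi|\chi|/|\kappa|>0$, so the sequence does not collapse, $\dim X=2$, and the dichotomy gives $X$ a closed surface homeomorphic to each $S_{n}$, i.e. $X\in\mathcal{A}(\kappa,\chi,o)$. Thus $\mathcal{A}(\kappa,\chi,o)$ with $\chi<0$ is closed in the space of compact metric spaces, hence in $\bar{\mathcal{A}}(\kappa)$, and it is connected by Theorem \ref{1}. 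It remains to check that the decomposition
\[
\bar{\mathcal{A}}(\kappa)=\Big(\textstyle\bigcup_{\chi\ge0,\,o}\bar{\mathcal{A}}(\kappa,\chi,o)\Big)\ \sqcup\ \textstyle\bigsqcup_{\chi<0,\,o}\mathcal{A}(\kappa,\chi,o)
\]
is by open sets. The big piece is a finite union of closed sets, hence closed; its complement $\bigcup_{\chi<0,o}\mathcal{A}(\kappa,\chi,o)$ is also closed, since a sequence of closed surfaces with $\chi<0$ converging to a compact limit has bounded diameter, hence bounded area by (ii), hence bounded $|\chi|$ by (i), so along a subsequence it lies in a single $\mathcal{A}(\kappa,\chi,o)$, which is closed. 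Therefore both pieces are open as well, the big piece being connected by the argument of the $\kappa\ge0$ case (with $\chi$ restricted to $\{0,1,2\}$). Being open, closed and connected, these pieces are exactly the connected components of $\bar{\mathcal{A}}(\kappa)$. The delicate point is precisely this last, $\kappa<0$ bookkeeping: one must prevent the countably many components $\mathcal{A}(\kappa,\chi,o)$, $\chi<0$, from accumulating onto one another or onto the degenerate limits, and this is where Gauss--Bonnet (area bounded below by $2\pi|\chi|/|\kappa|$), Bishop--Gromov (area bounded above in terms of the diameter, and area of small balls), and Perelman's stability must interlock. The only genuinely external input is Perelman's stability theorem; that a $2$-dimensional limit of closed surfaces is again a closed surface is subsumed by it, and ``collapse forces area to zero'' is, in dimension $2$, just the Bishop--Gromov bound on small balls together with the $O(1/\delta)$ covering number of a curve.
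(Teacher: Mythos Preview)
Your proof is correct and follows the same strategy as the paper: the one-point space lies in every $\bar{\mathcal{A}}(\kappa,\chi,o)$ with $\chi\ge0$, which makes the big piece connected, and for $\kappa<0$, $\chi<0$ the Gauss--Bonnet lower bound $\mu(A)\ge 2\pi|\chi|/|\kappa|$ prevents collapse, so $\mathcal{A}(\kappa,\chi,o)$ is already closed. The paper packages the collapse criterion as a cited lemma (Corollary~10.10.11 of \cite{bbi}) rather than invoking Bishop--Gromov directly, but the content is the same. The one place where you go beyond the paper is the final bookkeeping: you explicitly check that $\bigcup_{\chi<0,o}\mathcal{A}(\kappa,\chi,o)$ is closed (bounded diameter $\Rightarrow$ bounded area $\Rightarrow$ bounded $|\chi|$), so that all pieces of the decomposition are clopen; the paper stops after showing each $\mathcal{A}(\kappa,\chi,o)$ with $\chi<0$ is closed and leaves the passage to ``these are exactly the connected components'' implicit.
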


%%%%%%%%%%%%%%%%%%%%%%%%%%%%%%%%%%%%%%%%%%%%%%%%%%%%%%%%%%%%%5

\section{Proofs}

Perelman's stability theorem can be found, for example, in \cite{Per1} or
\cite{Kapo1}; we only need a particular form of it.

\begin{lemma}
\label{stability} Each Alexandrov surface $A$ has a neighbourhood in
$\mathcal{A}\left(  \kappa\right)  $ whose elements are all homeomorphic to
$A$.
\end{lemma}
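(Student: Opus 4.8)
The plan is to obtain the lemma as the two-dimensional instance of Perelman's stability theorem. I would first recall that theorem in the precise form needed: for every compact $n$-dimensional Alexandrov space $X$ with curvature bounded below by $\kappa$ there is a number $\varepsilon=\varepsilon(X)>0$ such that every compact $n$-dimensional Alexandrov space with curvature bounded below by $\kappa$ lying within Gromov-Hausdorff distance $\varepsilon$ of $X$ is homeomorphic to $X$ (see \cite{Per1}, \cite{Kapo1}). Applying this with $n=2$ and $X=A$, the open ball $\{B\in\mathcal{A}(\kappa):d_{GH}(A,B)<\varepsilon(A)\}$ is a neighbourhood of $A$ in $\mathcal{A}(\kappa)$ all of whose elements are homeomorphic to $A$, which is exactly the assertion.

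The one point requiring attention is that the stability theorem is stated for non-collapsing sequences, i.e.\ it presupposes that the approximating spaces have the same dimension as the limit. In the present situation this hypothesis is automatically satisfied, since by our standing convention every element of $\mathcal{A}(\kappa)$, including $A$ itself, is a $2$-dimensional compact Alexandrov space without boundary; hence no drop of dimension can occur and Perelman's theorem applies verbatim. Equivalently, one may argue by contradiction: were the conclusion false, there would be a sequence $A_i\to A$ in $\mathcal{A}(\kappa)$ with each $A_i$ not homeomorphic to $A$; but $\dim A_i=\dim A=2$, so stability forces $A_i$ to be homeomorphic to $A$ for all large $i$, a contradiction.

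The main obstacle is therefore not in the argument but in the depth of the cited input: the full strength of Perelman's stability theorem is substantial, whereas here it is used only to match hypotheses and read off the neighbourhood. If a self-contained proof were wanted instead, one could use that a compact $2$-dimensional Alexandrov space is a closed topological surface, hence classified up to homeomorphism by its Euler characteristic and orientability, and then control these two invariants under small Gromov-Hausdorff perturbations; but this route is longer and also rests on structural results for surfaces of bounded curvature, so invoking stability directly is preferable.
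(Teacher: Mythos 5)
Your proposal is correct and matches the paper, which gives no separate argument at all: it simply records this lemma as the particular two-dimensional form of Perelman's stability theorem, citing \cite{Per1} and \cite{Kapo1}, exactly as you do. Your remark that non-collapsing is automatic because every element of $\mathcal{A}(\kappa)$ is a compact $2$-dimensional Alexandrov space is the right (and only) point to check.
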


Let $\mathbb{M}_{\kappa}^{d}$ stand for the simply-connected and complete
Riemannian manifold of dimension $d$ and constant curvature $\kappa$.

Denote by $\mathcal{R} (\kappa)$ the set of all closed Riemannian surfaces
with Gauss curvature at least $\kappa$, and by $\mathcal{P}(\kappa)$ the set
of all $\kappa$-polyhedra. Recall that a $\kappa$\emph{-polyhedron} is an
Alexandrov surface obtained by naturally gluing finitely many geodesic
polygons from $\mathbb{M}_{\kappa}^{2}$.

\medskip

A formal proof for the following result can be found in \cite{IRV2}.

\begin{lemma}
\label{dense} The sets $\mathcal{R}(\kappa)$ and $\mathcal{P}(\kappa)$ are
dense in $\mathcal{A}(\kappa)$.
\end{lemma}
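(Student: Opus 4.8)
The plan is to realise an arbitrary $A\in\mathcal A(\k)$ as a Gromov--Hausdorff limit of $\k$-polyhedra built from comparison triangles, and then to realise every $\k$-polyhedron as a limit of surfaces in $\mathcal R(\k)$ by smoothing; since $\mathcal P(\k)\subseteq\mathcal A(\k)$ and, by the smoothing step, $\mathcal P(\k)\subseteq\overline{\mathcal R(\k)}$, the two density assertions follow together. I would first recall the classical fact that a compact $2$-dimensional Alexandrov space of curvature $\ge\k$ is a surface of bounded curvature whose curvature measure $\o$ satisfies $\o-\k\mathcal H^2\ge 0$; hence $\o^-\le\k^-\mathcal H^2$ is finite, and by Gauss--Bonnet $\o(A)=2\pi\chi(A)$, so $\o^+$ is finite as well and $\o$ has only finitely many atoms of any given positive mass. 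Given $\e>0$, I would then construct a geodesic triangulation $\mathcal T$ of $A$ --- a subdivision into embedded topological triangles with minimising geodesic sides --- of mesh $<\e$ whose vertex set contains every atom of $\o$ of mass $\ge\e^2$ (one includes the finitely many such atoms among the vertices and then refines). That arbitrarily fine embedded geodesic triangulations exist on a compact Alexandrov surface is classical but needs some care, since minimising geodesics between nearby points need not be unique.

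For each cell $T\in\mathcal T$ let $\tilde T\subset\mathbb M_\k^2$ be the comparison triangle with the same side lengths (non-degenerate, and well defined for small mesh, which matters when $\k>0$), and let $P$ be obtained by gluing the $\tilde T$ along edges following the combinatorics of $\mathcal T$. Then $P$ is a closed surface assembled from finitely many geodesic polygons of $\mathbb M_\k^2$, and I claim $P\in\mathcal A(\k)$, so that $P\in\mathcal P(\k)$. Indeed, near the relative interior of a glued edge a neighbourhood in $P$ is two half-discs of $\mathbb M_\k^2$ glued along a common diameter, hence isometric to a disc of $\mathbb M_\k^2$, so no curvature appears along edges; and at a vertex $v$ the total angle of $P$ is $\sum_{T\ni v}\angle_{\tilde T}(v)$, which by Toponogov's angle comparison is at most $\sum_{T\ni v}\angle_{T}(v)$, the total angle of $A$ at $v$, which is $\le 2\pi$. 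So every vertex of $P$ carries a non-negative curvature atom, $\o_P-\k\mathcal H^2\ge 0$, and $P$ has curvature $\ge\k$.

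It then remains to bound $d_{GH}(A,P)$. The point is that the $1$-skeleton $\G$ of $\mathcal T$ has the same edge lengths in $A$ and in $P$, so it is literally the same metric graph in both surfaces, and since the mesh is $<\e$ it is $\e$-dense in each; hence it suffices to show $|d_A(x,y)-d_P(x,y)|\le C\e$ for $x,y\in\G$, with $C$ independent of $\e$. One inequality, $d_P\le d_A$, follows by cutting a minimising $A$-geodesic from $x$ to $y$ into the sub-arcs lying in the successive cells it meets, replacing each sub-arc by the corresponding segment of the matching $\tilde T$ --- not longer, by Toponogov's distance comparison --- and concatenating these segments into a path of $P$ from $x$ to $y$. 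For the reverse inequality $d_A\le d_P+C\e$ I would argue symmetrically starting from a minimising $P$-geodesic, the crucial input being that the cell $T$ and its comparison $\tilde T$ share the same boundary and, since $T$ has no interior curvature atom of mass $\ge\e^2$, their intrinsic metrics differ by at most $C\,\mathrm{diam}(T)\,|\o|(T)$; summing over the finitely many cells met and using $\mathrm{diam}(T)<\e$ together with $\sum_{T}|\o|(T)\le|\o|(A)<\infty$ keeps the total error $O(\e)$. This gives $d_{GH}(A,P)=O(\e)$, so $\mathcal P(\k)$ is dense in $\mathcal A(\k)$. I expect this metric estimate --- upgrading the cell-by-cell comparison to a uniform bound, so that the error does not accumulate with the large number of cells --- to be the main obstacle.

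Finally, for the density of $\mathcal R(\k)$ it suffices to show that every $\k$-polyhedron $Q$ lies in $\overline{\mathcal R(\k)}$: in a $\d$-neighbourhood of the $1$-skeleton of $Q$ one rounds the metric so as to spread the (non-negative) line curvature of each edge and the (non-negative) angle deficit of each vertex over a thin region, leaving the faces untouched; since the faces already have curvature exactly $\k$ and only non-negative curvature is added, the rounded metric is smooth with Gauss curvature $\ge\k$, and it is $C^0$-close to the polyhedral one, so the resulting surface in $\mathcal R(\k)$ is within $O(\d)$ of $Q$. Together with the previous step this gives $\mathcal A(\k)=\overline{\mathcal P(\k)}\subseteq\overline{\mathcal R(\k)}$, completing the argument. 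Apart from the metric estimate above, the remaining technical points --- fine embedded geodesic triangulations, and a $C^2$ rounding of a convex polyhedral metric preserving $K\ge\k$ --- are standard but not entirely trivial.
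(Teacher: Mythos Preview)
The paper does not actually prove this lemma: the sentence preceding it reads ``A formal proof for the following result can be found in \cite{IRV2}'', and no argument is given. So there is no proof in the paper to compare against, only a citation.

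That said, your plan is the classical one, and in fact the polyhedral-approximation half of it is exactly the construction the paper itself invokes in the proof of Lemma~\ref{LFlat}: build $P$ by replacing each cell of a fine geodesic triangulation by its comparison triangle in $\mathbb{M}_\kappa^2$, observe that Toponogov forces the cone angles of $P$ to be at most those of $A$ (hence $\le 2\pi$), and then appeal to the convergence $P\to A$. The paper attributes that convergence to Alexandrov--Zalgaller (Theorem~10 on p.~90 of \cite{AZ}), which is precisely the estimate $d_{GH}(A,P)=O(\e)$ that you correctly identify as the main obstacle; your sketch of the two inequalities is along the right lines, and your hinge argument for $d_P\le d_A$ is correct.

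One small inconsistency: you first (correctly) observe that gluing two half-discs of $\mathbb{M}_\kappa^2$ along a geodesic produces no singular curvature on the edge, and then in the smoothing step you speak of spreading ``the (non-negative) line curvature of each edge''. There is none; the rounding has to take place only in neighbourhoods of the vertices, replacing each conical tip of angle $\theta<2\pi$ by a smooth cap with curvature $\ge\kappa$. This is indeed standard (a warped-product computation), but when $\kappa>0$ one must check that the interpolating profile can be chosen with $-h''/h\ge\kappa$ throughout, not merely $\ge 0$.
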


A \emph{convex surface} in $\mathbb{M}_{\kappa}^{3}$ is the boundary of a
compact convex subset of $\mathbb{M}_{\kappa}^{3}$ with non-empty interior.
Such a surface is endowed with the so-called intrinsic metric: the distance
between two points is the length (measured with the metric of $\mathbb{M}%
_{\kappa}^{3}$) of a shortest curve joining them and lying on the surface.

\begin{lemma}
\label{real} \cit{al} Every convex surface in $\mathbb{M}_{\kappa}^{3}$
belongs to $\mathcal{A}\left(  \kappa,2,1\right) $. Conversely, every surface
$A\in\mathcal{A}\left(  \kappa,2,1\right) $ is isometric to some convex
surface in $\mathbb{M}_{\kappa}^{3}$.
\end{lemma}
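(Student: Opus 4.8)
The plan is to reduce the statement to the classical work of Alexandrov, to which the citation \cit{al} refers; let me describe the two directions and isolate the genuine difficulty.

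For the first assertion, let $S\subset\mathbb{M}_{\kappa}^{3}$ be a convex surface, so $S=\partial K$ for a compact convex body $K$ with an interior point $p$. Radial projection along the geodesics of $\mathbb{M}_{\kappa}^{3}$ issuing from $p$ --- each of which meets $S$ exactly once --- is a homeomorphism of $S$ onto a small geodesic sphere around $p$, so $S$ is homeomorphic to $\mathbb{S}^{2}$; hence $\chi=2$ and $o=1$. For the curvature bound, approximate $K$ in the Hausdorff metric by inscribed convex polyhedra $P_{n}$; since Hausdorff convergence of convex bodies forces Gromov-Hausdorff convergence of the intrinsic metrics of their boundaries, $\partial P_{n}\to S$ in $d_{GH}$. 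Each $\partial P_{n}$ is a $\kappa$-polyhedron, locally isometric to $\mathbb{M}_{\kappa}^{2}$ off the vertices and with total angle at most $2\pi$ at every vertex by convexity of $K$, which is exactly Toponogov's comparison condition with lower bound $\kappa$; thus $\partial P_{n}\in\mathcal{A}(\kappa)$. Since curvature $\geq\kappa$ is preserved under $d_{GH}$-limits, $S\in\mathcal{A}(\kappa)$, and therefore $S\in\mathcal{A}(\kappa,2,1)$.

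For the converse, let $A\in\mathcal{A}(\kappa,2,1)$. By Lemma~\ref{dense} there are $\kappa$-polyhedra $A_{n}\to A$ in $d_{GH}$, and by Lemma~\ref{stability} the $A_{n}$ are homeomorphic to $\mathbb{S}^{2}$ for $n$ large; being Alexandrov surfaces, they have all cone angles at most $2\pi$. Alexandrov's realization theorem for polyhedra then yields convex polyhedra $Q_{n}\subset\mathbb{M}_{\kappa}^{3}$ with $\partial Q_{n}$ isometric to $A_{n}$; this is proved by a continuity and mapping-degree argument establishing that the realization map, from the finite-dimensional connected space of such polyhedral metrics with a fixed combinatorial type to the space of convex polyhedra of that type, is onto. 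Placing the $Q_{n}$ in a fixed position and applying the Blaschke selection theorem in $\mathbb{M}_{\kappa}^{3}$, a subsequence converges to a compact convex body $Q$ whose boundary is isometric to $A$. If $Q$ has non-empty interior, $\partial Q$ is the required convex surface; the only alternative is that $Q$ degenerates to a doubly covered convex planar region, which is excluded by first perturbing the metric of $A$ (smoothing it slightly while keeping curvature $\geq\kappa$) so that each realization is non-degenerate and stays bounded away from degeneracy, and then letting the perturbation go to $0$ --- or, more directly, by quoting the precise form of the realization theorem in \cit{al}.

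The main obstacle is the converse, and inside it the surjectivity of the polyhedral realization map: one needs the invariance-of-domain argument together with the \emph{a priori} geometric estimates that force the image to be closed, so that a connectedness argument can conclude. Passing from polyhedra to the general surface is comparatively routine compactness, the only point requiring care being the possible degenerate limit, in view of the requirement here that a convex surface bound a body with non-empty interior.
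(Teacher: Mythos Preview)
The paper does not prove this lemma at all: it is stated with the citation \cit{al} and used as a black box. So there is no ``paper's own proof'' to compare against; you are sketching the content of Alexandrov's monograph, and your outline is indeed the classical route (polyhedral approximation for the forward direction; polyhedral realization plus Blaschke selection for the converse).

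One point deserves care. You flag the possibility of a degenerate limit $Q$ and propose to exclude it by perturbation. But with the paper's definition of \emph{convex surface} (boundary of a compact convex set \emph{with non-empty interior}), the converse as stated is in fact slightly false: the double of a flat convex disk lies in $\mathcal{A}(0,2,1)$, yet its only realization in $\mathbb{R}^{3}$ is the degenerate doubly-covered disk, which is not a convex surface under this definition. Your perturbation argument cannot rescue this case, since any small perturbation of that metric still has realizations arbitrarily close to degenerate, and in the limit you recover the degenerate body. The correct reading --- and what Alexandrov actually proves --- is that the realization may be a degenerate (doubly-covered) convex set; the paper is simply being informal about this edge case, and it does not affect how the lemma is used later.
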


In order to settled the case of $\mathcal{A}\left(  0,0,o\right) $, we
need the following lemma.

\begin{lemma}
\label{LFlat} $\mathcal{A}\left(  0,0,1\right) $ contains only flat tori, and
$\mathcal{A}\left( 0,0,-1\right) $ contains only flat Klein bottles.
\end{lemma}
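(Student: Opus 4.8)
The plan is to run a Gauss--Bonnet argument. Fix $A\in\mathcal{A}(0,0,o)$. As a $2$-dimensional Alexandrov space of curvature $\geq 0$ without boundary, $A$ is a closed surface; moreover it is a surface of bounded curvature in the sense of Alexandrov and Zalgaller, whose curvature measure $\omega$ is non-negative, since the synthetic lower bound ``curvature $\geq 0$'' is precisely the statement $\omega\geq 0\cdot\mathcal{H}^2$, i.e. $\omega\geq 0$. I would recall this equivalence with a reference to \cite{BGP} and \cite{A-Z}. The Gauss--Bonnet theorem for surfaces of bounded curvature then gives $\omega(A)=2\pi\chi(A)=0$, and combined with $\omega\geq 0$ this forces $\omega\equiv 0$ on $A$.

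Next I would observe that a closed surface of bounded curvature with vanishing curvature measure is a closed flat Riemannian surface: $\omega\equiv 0$ rules out any point of cone angle different from $2\pi$ (such a point would carry an atom of $\omega$), and the Gaussian curvature vanishes at the remaining, smooth points, so $A$ is locally isometric to $\mathbb{R}^2$ and carries a genuine flat Riemannian metric. Finally, by the classification of closed flat surfaces (Killing--Hopf), a connected closed flat Riemannian surface with $\chi=0$ is a flat torus if it is orientable and a flat Klein bottle if it is not. Since the orientability of $A$ is recorded by $o$, this is exactly the assertion of the lemma: $\mathcal{A}(0,0,1)$ consists of flat tori and $\mathcal{A}(0,0,-1)$ of flat Klein bottles.

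The one step that deserves care, and which I expect to be the only real obstacle, is the first one: making precise that Toponogov's comparison condition for a $2$-dimensional Alexandrov space is equivalent to non-negativity of the Alexandrov curvature measure, so that the classical Gauss--Bonnet formula for surfaces of bounded curvature applies verbatim. This is standard for surfaces, but it should be stated precisely and cited; once it is in place, the rest of the proof is immediate.
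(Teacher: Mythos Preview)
Your argument is correct but takes a different route from the paper. The paper's proof is constructive: it fixes a geodesic triangulation of $A$, replaces each triangle by its Euclidean comparison triangle, and glues these to form a $0$-polyhedron $P$; angle comparison gives $\theta_i\le 2\pi$ at every vertex of $P$, and the combinatorial Gauss--Bonnet formula $\sum_i(2\pi-\theta_i)=2\pi\chi=0$ then forces each $\theta_i=2\pi$, so $P$ is a flat torus. Passing to finer and finer triangulations yields a sequence of flat tori $P_m$ which, by a convergence theorem of Alexandrov--Zalgaller \cite{AZ}, tends to $A$; hence $A$ itself is flat. You instead apply Gauss--Bonnet directly to the curvature measure $\omega$ of $A$, skipping the polyhedral approximation entirely.

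What each approach buys: the paper's argument is more self-contained, using only the comparison definition and elementary polyhedral geometry. Yours is shorter and more conceptual, but its first step---that a $2$-dimensional Alexandrov space of curvature $\ge\kappa$ is a surface of bounded curvature with $\omega\ge\kappa\mu$---is precisely Machigashira's theorem \cite{m}, not something found in \cite{BGP} or \cite{AZ} as you suggest. In fact the paper invokes exactly this result later, in the proof of Theorem~\ref{2}, so your route is entirely consistent with the paper's toolbox; you are simply anticipating an ingredient the authors deploy elsewhere. One further remark: the passage from $\omega\equiv 0$ to ``locally isometric to $\mathbb{R}^2$'' is standard but not quite as immediate as you indicate (a priori there need not be a dichotomy between cone points and smooth points); it follows from the developability of zero-curvature regions in the Alexandrov--Zalgaller theory \cite{AZ}, and a citation there would close the argument.
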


\begin{proof} Recall that geodesic triangulations with arbitrarily small triangle 
exist for any Alexandrov surface \cite{al}.

Consider $A\in\mathcal{A}\left(0,0,1\right)$ and a geodesic triangulation
$T=\left\{ \Delta_{i}\right\}$ of $A$. For each $\Delta_{i}$,
consider a comparison triangle $\tilde{\Delta}_{i}$ (\ie, a triangle with the
same edge lengths) in $\mathbb{M}^2_{0}$. 
Glue together the triangles $\tilde{\Delta}_{i}$ to obtain a surface $P$, 
in the same way the triangles $\Delta_{i}$ are glued together to compose $A$.
By the definition of Alexandrov surfaces, the angles of $\tilde{\Delta}_{i}$ 
are lower than or equal to the angles of $\Delta_{i}$.
It follows that the total angles $\theta_{1}$, \ldots, $\theta_{n}$ of $P$ around its
(combinatorial) vertices are at most $2\pi$, hence $P$ is a $0$-polyhedron.
By the Gauss-Bonnet formula for polyhedra,
\[
0=2\pi\chi=\sum_{i=1}^{n}\left(  2\pi-\theta_{_{i}}\right)  \text{,}%
\]
whence $\theta_{i}=2\pi$ and $P$ is indeed a flat torus.

Now consider a sequence of finer and finer triangulations $T_{m}$ of $A$ and denote by $P_{m}$ the corresponding flat tori ($m \in \mathbb{N}$). A result of Alexandrov and Zalgaller 
(Theorem 10 in \cite[p. 90]{AZ}) assures that $P_{m}$ converges to $A$, which is therefore flat.

The same argument holds for $\mathcal{A}\left(0,0,-1\right)$.
\end{proof}

Now we are in a position to prove Theorem \ref{1}.

Notice that, for $\kappa^{\prime}>\kappa$, $\mathcal{A}\left(\kappa^{\prime}\right)$ 
is a nowhere dense subset of $\mathcal{A}\left(\kappa\right)$; indeed, $\mathcal{A}\left(\kappa^{\prime}\right)$ is closed and its complement contains the $\kappa$-polyhedra, which are dense in $\mathcal{A}\left(\kappa\right)$. Therefore, there is no direct relationship between the connected components of $\mathcal{A}\left(\kappa\right)$ and those of $\mathcal{A}\left(\kappa^{\prime}\right)$.

\begin{proof}
[Proof of Theorem \ref{1}]By Lemma \ref{stability}, each set $\mathcal{A}%
\left( \kappa,\chi,o\right) $ is open in $\mathcal{A}\left(
\kappa\right) $, so we just need to prove that it is connected.

Each Alexandrov surface $A$ is in particular a metric space. Multiplying all
distances in $A\in\mathcal{A}(\kappa)$ with the same constant $\delta>0$
provides another Alexandrov surface, denoted by $\delta A$, which belongs to
$\mathcal{A}(\frac{\kappa}{\delta^{2}})$. Moreover, it is easy to see that for
any metric spaces $M$, $N$ we have $d_{GH}\left(  \delta M,\delta N\right)
=\delta d\left( M,N\right)$. So there is a natural homothety between
$\mathcal{A}(\kappa)$ and $\mathcal{A}(\frac{\kappa}{\delta^{2}})$, and
therefore we may assume that
\[
\kappa\in\{-1,0,1\}.
\]

We consider several cases.

\medskip

\textit{Case 1.} The sets $\mathcal{A}(-1,\chi,o)$ are connected in
$\mathcal{A}(-1)$.

Choose $A_{0}$, $A_{1}\in\mathcal{A}(-1,\chi,o)\cap\mathcal{R}(-1)$.
There exist a differentiable surface $S$ of Euler-Poincar\'{e} characteristic $\chi$
and orientability $o$, and Riemannian metrics $g_{0}$, $g_{1}$ on $S$
such that $A_{i}$ is isometric to $\left(  S,g_{i}\right)  $ ($i=0$,$1$). For
$\lambda\in\lbrack0,1]$ we set
\[
\tilde{g}_{\lambda}=\lambda g_{1}+(1-\lambda)g_{0}\text{.}%
\]
Denote by $\kappa_{\lambda}$ the minimal value of the Gauss curvature of
$\tilde{g}_{\lambda}$, and define the Riemannian metric $g_{\lambda}$ on $S$
by
\[
g_{\lambda}=\left\{
\begin{array}
[c]{l}%
\tilde{g}_{\lambda}\\
\frac{\tilde{g}_{\lambda}}{\sqrt{-\kappa_{\lambda}}}%
\end{array}
\right.
\begin{array}
[c]{l}%
\text{if }\kappa_{\lambda}\geq-1\text{,}\\
\text{if }\kappa_{\lambda}<-1\text{.}%
\end{array}
\]
A straightforward computation shows that the Gauss curvature $K_{\lambda}$ of
$g_{\lambda}$ verifies $K_{\lambda}\geq-1$.

Denote by $\gamma$ the (obviously continuous) canonical map from the set of Riemannian structures on $S$ to $\mathcal{A}(-1,\chi,o)$, which maps $g$ to $\left( S,g\right)$. Then $A_{\lambda}\overset{\mathrm{def}}{=}\gamma\left( g_{\lambda}\right)$ defines a path from $A_{0}$ to $A_{1}$. 
Hence $\mathcal{A}(-1,\chi,o)\cap\mathcal{R}(-1)$ is connected and, by the
density of $\mathcal{R}(-1)$, so is $\mathcal{A}(-1,\chi,o)$.

\medskip

Next we treat the connected components of $\mathcal{A}(0)$.

\medskip

\textit{Case 2.} The sets $\mathcal{A}\left(  0,0,1\right) $ and
$\mathcal{A}\left( 0,0,-1\right) $ are connected in $\mathcal{A}(0)$.

By Lemma \ref{LFlat}, the set $\mathcal{A}\left(  0,0,1\right)  $ contains
only flat tori, hence it is conti\-nu\-ously para\-me\-trized by the parameters
describing the fundamental domains. Similarly for $\mathcal{A}\left(
0,0,-1\right)  $, which consists of flat Klein bottles.

\medskip

\textit{Case 3.} The set $\mathcal{A}(0,2,1)$ is connected in $\mathcal{A}(0)$.

Denote by $\mathcal{S}$ the space of all convex surfaces in ${\mathbb{R}}^{3}$, endowed with the Pompeiu-Hausdorff metric. Lemma \ref{real} shows that any surface $A\in\mathcal{A}(0,2,1)$ can be realized as a convex surface in ${\mathbb{R}}^{3}$.

Given two convex surfaces $S_{0}$, $S_{1}$, define for $\lambda\in
\lbrack0,1]$
\begin{equation}
S_{\lambda}=\partial\left(  \lambda\mathrm{conv}\left(  S_{1}\right)
+(1-\lambda)\mathrm{conv}\left(  S_{0}\right)  \right)  \text{,}\label{Sl}%
\end{equation}
where $\partial C$ stands for the boundary of $C$, $\mathrm{conv}\left(
S\right)  $ for the convex hull of $S$, and $+$ for the Minkowski sum. Then
$S_{\lambda}\in\mathcal{S}$ and we have a path in $\mathcal{S}$ joining
$S_{0}$ to $S_{1}$. Since the canonical map $\sigma$ from $\mathcal{S}$ to
$\mathcal{A}(0,2,1)$ is continuous \cite[Theorem 1 in Chapter 4]{al}, we
obtain a path in $\mathcal{A}(0,2,1)$.

\medskip

\textit{Case 4.} The set $\mathcal{A}(0,1,-1)$ is connected in $\mathcal{A}%
(0)$.

Consider surfaces $A_{0}$, $A_{1}$ in $\mathcal{A}(0,1,-1)$ as quotients of
centrally-symme\-tric convex surfaces $S_{0}$, $S_{1}$ via antipodal
identification, $A_{i}=\sigma\left(  S_{i}\right)  /{\mathbb{Z}}_{2}$
($i=0,1$). Then the surface $S_{\lambda}$ defined by (\ref{Sl}) is also
centrally-symmetric, and therefore $A_{\lambda}=\sigma\left(  S_{\lambda
}\right)  /{\mathbb{Z}}_{2}$ defines a path in $\mathcal{A}(0,1,-1)$ from $A_{0}$
to $A_{1}$.

\medskip

We finally treat the two connected components of $\mathcal{A}(1)$.

\medskip

\textit{Case 5.} The set $\mathcal{A}(1,2,1)$ is connected in $\mathcal{A}(1)$.

Consider in ${\mathbb{R}}^{4}$ the subspace ${\mathbb{R}}^{3}={\mathbb{R}}%
^{3}\times\{0\}$, and the open half-sphere $H$ of center $c=(0,0,0,1)$ and
radius $1$ included in ${\mathbb{R}}^{3}\times\lbrack0,1[$.

Let $q:{\mathbb{R}}^{3}\rightarrow H$ be the homeomorphism associating to each
$x\in{\mathbb{R}}^{3}$ the intersection point of the segment $[xc]$ with $H$.
Clearly, $q$ maps segments of ${\mathbb{R}}^{3}$ to geodesic segments of $H$,
and thus it maps bijectively convex sets in ${\mathbb{R}}^{3}$ to convex sets in $H$.
Denote $\mathcal{S}_{H}$ the set of convex surfaces in $H$. We can define
$Q:\mathcal{S}\rightarrow\mathcal{S}_{H}$ by $Q(S)\overset{\mathrm{def}}%
{=}q(S)$. Hence $\mathcal{S}_{H}$ is homeomorphic to $\mathcal{S}$, which is
connected by Case (4).

Consider now two surfaces $A_{0}$, $A_{1}\in\mathcal{A}(1,2,1)$ and choose
\[
\mu< \min\left\{ \frac{\pi}{2 \mathrm{\mathrm{diam}}\left( A_{0}\right)}, \frac{\pi}{2 \mathrm{\mathrm{diam}}\left( A_{1}\right) },1\right\} \text{.}%
\]
Obviously, $A_{i}$ is path-connected to $\mu A_{i}$ in $\mathcal{A}(1,2,1)$,
and the diameter of $\mu A_{i}$ is less than $\pi/2$ ($i=0$, $1$). 
By Lemma \ref{real}, $\mu A_{i}$ is isometric to a surface $S_{i}$ in $\mathbb{M}_1^{3}$; 
moreover, the smallness of $\mu$ easily implies that $S_{i}$ is isometric to a surface in $\mathcal{S}_{H}$, 
and $\mathcal{S}_{H}$ is connected.

\medskip

\textit{Case 6.} The set $\mathcal{A}(1,1,-1)$ is connected in $\mathcal{A}%
(1)$.

This follows directly from the previous argument, because the universal
covering of any surface $\tilde{A}\in\mathcal{A}(1,1,-1)$ is a surface
$A\in\mathcal{A}(1,2,1)$ endowed with an isometric involution without fixed
points, $\tilde{A}=A/{\mathbb{Z}}_{2}$.

\medskip

The proof of Theorem \ref{1} is complete.
\end{proof}

Recall that the $2$-dimensional Hausdorff measure $\mu(A)$ is always finite
and positive for $A \in\mathcal{A}(\kappa)$. The following result is Corollary
10.10.11 in \cite[p. 401]{bbi}, stated in our framework.

\begin{lemma}
\label{no_collaps} Let $A_{n} \in\mathcal{A}(\kappa)$ converge to a compact
space $X$. Then $\dim(X)<2$ if and only if $\mu(A_{n}) \to0$.
\end{lemma}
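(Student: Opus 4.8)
The plan is to reduce the statement to standard facts of Alexandrov geometry; as already indicated, it is essentially Corollary 10.10.11 of \cite{bbi}, so the task is mainly to recall why it holds in our framework. I would begin by noting that a Gromov--Hausdorff limit of surfaces from $\mathcal{A}(\kappa)$ is again a compact Alexandrov space with curvature bounded below by $\kappa$, of integer Hausdorff dimension at most $2$, and connected (being a limit of connected spaces); hence $\dim(X)\in\{0,1,2\}$, and $\dim(X)<2$ exactly when $X$ is a point, a circle, or a closed arc.

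For the implication ``$\dim(X)<2\Rightarrow\mu(A_n)\to0$'' I would combine a covering estimate with the Bishop--Gromov inequality. Since $X$ has dimension at most $1$, its covering number $N(\varepsilon)$ by balls of radius $\varepsilon$ satisfies $N(\varepsilon)\le c/\varepsilon$ for all small $\varepsilon>0$, with $c$ depending only on $\mathrm{diam}(X)$. Fixing such an $\varepsilon$ and taking $n$ large enough that $d_{GH}(A_n,X)<\varepsilon$, one pushes an $\varepsilon$-net of $X$ through a Gromov--Hausdorff correspondence to obtain a $3\varepsilon$-net of $A_n$ of cardinality at most $N(\varepsilon)$; thus $A_n$ is covered by at most $N(\varepsilon)$ metric balls of radius $3\varepsilon$. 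By Bishop--Gromov for $2$-dimensional Alexandrov spaces with curvature $\ge\kappa$, each such ball has $\mu$-measure at most $V_\kappa(3\varepsilon)$, the area of a disk of radius $3\varepsilon$ in $\mathbb{M}_{\kappa}^{2}$, so
\[
\mu(A_n)\ \le\ N(\varepsilon)\,V_{\kappa}(3\varepsilon)\ \le\ \frac{c}{\varepsilon}\,V_{\kappa}(3\varepsilon)
\]
for all large $n$. Since $V_{\kappa}(3\varepsilon)=O(\varepsilon^{2})$ as $\varepsilon\to0$, the right-hand side goes to $0$ with $\varepsilon$, and as $\varepsilon$ was arbitrary this forces $\mu(A_n)\to0$.

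For the converse ``$\mu(A_n)\to0\Rightarrow\dim(X)<2$'' I would argue by contradiction: if $\dim(X)=2$, the sequence is non-collapsing, and the continuity of the $2$-dimensional Hausdorff measure along non-collapsing sequences of Alexandrov spaces with a fixed lower curvature bound (again \cite[\S10.10]{bbi}) yields $\mu(A_n)\to\mu(X)$; but $\mu(X)>0$, because the $2$-dimensional Hausdorff measure of a $2$-dimensional compact Alexandrov space is always positive, contradicting $\mu(A_n)\to0$.

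The main obstacle is not any single computation but rather pinning down, and citing correctly, the two non-elementary inputs from \cite{bbi}: the volume bound for metric balls in $2$-dimensional Alexandrov spaces, and the continuity of $\mu$ along non-collapsing sequences. Everything else --- the classification of $1$-dimensional compact Alexandrov spaces, hence the $O(1/\varepsilon)$ covering estimate, and the passage from a net of $X$ to a net of $A_n$ --- is routine, and the combination is exactly \cite[Corollary 10.10.11, p.~401]{bbi}.
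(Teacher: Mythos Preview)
The paper does not prove this lemma at all: it is stated immediately after the sentence ``The following result is Corollary 10.10.11 in \cite[p.~401]{bbi}, stated in our framework,'' and no argument is given. Your sketch is correct and simply unpacks the content of that corollary --- the covering-plus-Bishop--Gromov bound for the collapsing direction and the continuity of $\mu$ along non-collapsing sequences for the converse --- so you are supplying more than the paper does, not less.
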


\begin{proof}
[Proof of Theorem \ref{2}]We may assume, as in the proof of Theorem \ref{1},
that $\kappa\in\{-1,0,1\}$.

To prove that $\bar{\mathcal{A}}(\kappa)$ is connected for $\kappa\geq0$, it
suffices to show that the space consisting of a single point belongs to the
closure of any connected component of $\mathcal{A}(\kappa)$. This is indeed
the case, because for any $A\in\mathcal{A}(\kappa,\chi,o)$ and
$0<\delta\leq1$ we have $\delta A\in\mathcal{A}(\kappa,\chi,o)$, and
$\lim_{\delta\rightarrow0}\delta A$ is a point.

This also implies that
\[
\bigcup_{\substack{o=\pm1\\\chi=0,1,2}}\bar{\mathcal{A}}(-1,\chi,o)
\]
is connected.

Consider now $A\in\mathcal{A}\left(  -1,\chi,o\right)$ with $\chi<0$.
Let $\omega$ be the curvature measure on $A$ (see \cite{AZ}). 
Y. Machigashira \cite{m} proved that $\omega\geq\kappa\mu$ holds for any Alexandrov surface 
of curvature bounded below by $\kappa$. Therefore, by a variant of the Gauss-Bonnet theorem,
\[
2\pi\chi=\omega(A)\geq\kappa\mu\left(A\right)=-\mu\left( A\right)
\text{,}%
\]
hence $\mu(A)\geq2\pi|\chi|$. Lemma \ref{no_collaps} shows now that
$\mathcal{A}(-1,\chi,o)$ is closed in the space of all compact metric
spaces ($o=\pm1,\chi<0$).
\end{proof}

%%%%%%%%%%%%%%%%%%%%%%%%%%%%%%%%%%%%%%%%%%%%%%%%%%%%%%%%%%%%%%%%%%%%%%%%%%%%%%%

\bigskip

\noindent\textbf{Acknowledgement.} The authors were supported by the grant
PN-II-ID-PCE-2011-3-0533 of the Romanian National Authority for Scientific
Research, CNCS-UEFISCDI.

%%%%%%%%%%%%%%%%%%%%%%%%%%%%%%%%%%%%%%%%%%%%%%%%%%%

\bigskip

J\"oel Rouyer

\noindent{\small Institute of Mathematics ``Simion Stoilow'' of the Romanian
Academy, \newline P.O. Box 1-764, Bucharest 70700, ROMANIA \newline
Joel.Rouyer@ymail.com, Joel.Rouyer@imar.ro}

\medskip

Costin V\^{\i}lcu
%\hskip

\noindent{\small Institute of Mathematics ``Simion Stoilow'' of the Romanian
Academy, \newline P.O. Box 1-764, Bucharest 70700, ROMANIA \newline
Costin.Vilcu@imar.ro}

\end{document}